\title[A generalization of Euler numbers to finite Coxeter groups]
      {A generalization of Euler numbers to finite Coxeter groups}
\subjclass[2000]{05A18, 05E18, 11B68, 20F55.}
\date{}
\author{Matthieu Josuat-Vergès}
\keywords{Euler numbers, Finite Coxeter groups, Set partitions}
\thanks{Supported by ANR CARMA (ANR-12-BS01-0017).}
\address{CNRS and Institut Gaspard Monge, Université Paris-Est Marne-la-Vallée\\
5 Boulevard Descartes\\
Champs-sur-Marne\\
77454 Marne-la-Vallée cedex 2\\ France}
\email{matthieu.josuat-verges@univ-mlv.fr}
\newtheorem{theo}{Theorem}[section]
\newtheorem{prop}[theo]{Proposition}
\theoremstyle{definition}
\newtheorem{defi}[theo]{Definition}
\newtheorem{rema}[theo]{Remark}
\DeclareMathOperator{\stab}{Stab}
\DeclareMathOperator{\fix}{Fix}
\begin{document}

\begin{abstract}
It is known that Euler numbers, defined as the Taylor coefficients of the tangent and secant functions, 
count alternating permutations in the symmetric group. Springer defined a generalization of these numbers for 
each finite Coxeter group by considering the largest descent class, and computed the value in each case of 
the classification. 
We consider here another generalization of Euler numbers for finite Coxeter groups, building on Stanley's result 
about the number of orbits of maximal chains of set partitions. We present a method to compute these integers and 
obtain the value in each case of the classification.
\end{abstract}

\maketitle

\section{Introduction}

It is known since long ago \cite{andre} that the Euler numbers $T_n$, defined by 
\begin{equation} \label{deftn}
  \sec(z)+\tan(z) = \sum_{n\geq 0} T_n \frac{z^n}{n!}, 
\end{equation}
count alternating permutations in the symmetric group $\mathfrak{S}_n$
($\sigma$ is {\it alternating} if $\sigma(1)>\sigma(2)<\sigma(3)>\dots$).
Since then, there has been a lot of interest in these numbers and permutations, as exposed
in the recent survey of Stanley \cite{stanley}. 

It can be shown that alternating permutations form the largest descent class in the symmetric group.
Building on this, Springer \cite{springer} gave a characterization of the largest descent class of 
a finite Coxeter group, and computed its cardinality in each case of the classification.
The analog of alternating permutations for other groups were studied by Arnol'd \cite{arnold}, 
who called these objects {\it snakes}.
See also \cite[Section~3]{jnt} for an alternative proof of Springer's result. 
Another relevant reference is Saito's article \cite{saito}, where a more general problem is considered.

%
%
%

In this article, we are interested in another construction that relates the number $T_n$ with the symmetric group,
and can also be generalized to finite Coxeter groups.
Namely, there is an action of $\mathfrak{S}_n$ on the maximal chains in the lattice
of set partitions of size $n$, and Stanley \cite{stanley2} showed that the number of orbits is $T_{n-1}$.
It is now well established that set partitions can be realized as an intersection lattice generated by 
reflecting hyperplanes, so that the construction can be generalized and gives an integer $K(W)$ for each 
finite Coxeter group $W$, with $K(A_n)=T_n$. (Note that this differs from Springer's construction, where 
the the integer $T_n$ is related with the group $A_{n-1}$.)
We present a general method to compute $K(W)$ and apply it to obtain the value in each case of the classification.
There is some similarity with a problem studied by Reading \cite{reading}, which consists in the enumeration of 
maximal chains in the lattice of noncrossing partition (in both cases, there is a product formula for the 
reducible case and a recursion on maximal parabolic subgroup in the irreducible case). 

\section*{Acknowledgement}

I thank Vic Reiner for his helpful comments about this work.
I also thank the anonymous referees for helpful suggestions and corrections.

\section{\texorpdfstring{Definitions}{Definitions}}

\label{def}


Let $V$ be a Euclidean space, and $W$ 
a finite subgroup of $GL(V)$ generated by orthogonal reflections. Let $n$ be the rank of $W$, i.e. $n=\dim V$.
We call {\it reflecting hyperplane} an hyperplane $H\subset V$ which is the fixed point set of some 
reflection in $W$. The following definition is now 
well established, see for example \cite[Chapter 4]{armstrong}.

\begin{defi}
  The {\it set partition lattice} $\mathcal{P}(W)$ is the set of linear subspaces of $V$ that
  are an intersection of reflecting hyperplanes. It is ordered by reverse inclusion, i.e. $\pi \leq \rho$
  if $\rho \subset \pi$.
\end{defi}

\begin{rema}
We are mostly interested in the case where $V$ is the standard geometric representation of a finite Coxeter group $W$.
In this case, $\{0\} \in \mathcal{P}(W)$ and it is the maximal element.
But in what follows, it will also be convenient to consider some reflection subgroup $U\subset W$. The definition
is still valid and gives a subset $\mathcal{P}(U)\subset \mathcal{P}(W)$, and $\{0\} \notin \mathcal{P}(U)$ {\it a priori}.
\end{rema}

In the case $A_n$ of the classification, $W$ is the symmetric group $\mathfrak{S}_{n+1}$ acting on 
$V=\{ v\in\mathbb{R}^{n+1} \, : \, \sum v_i=0 \}$ by permuting coordinates. The reflecting 
hyperplanes are $H_{i,j} = \{ v\in V \, : \, v_i=v_j \} $ where $i<j$. We recover the traditional definition
of a set partition, for example if $n=6$, then 
\[ H_{1,7} \cap H_{2,4} \cap H_{4,5} = \{ v\in V \,:\, v_1=v_7, \; v_2=v_4=v_5 \} \in \mathcal{P}(A_{6}) 
 \]
corresponds to the set partition $17|245|3|6$.


Let $t\in W$ be a reflection, and $H = \fix(t)$  be its fixed point set.
Then $w(H) = \fix( w t w^{-1} )$ for $w\in W$.
So the natural action of $W$ on linear subspaces of $V$ gives an action of $W$ on the reflecting hyperplanes, 
and on $\mathcal{P}(W)$. Since inclusion and rank are preserved, this extends to an action on the maximal chains 
in $\mathcal{P}(W)$.

\begin{defi}
 Let $\mathcal{M}(W)$ denote the set of maximal chains in $\mathcal{P}(W)$, i.e. sequences $C=(C_0,\dots,C_n) \in \mathcal{P}(W)^{n+1}$
 where $C_0<\dots<C_n$ (this implies that $C_i$ has rank $i$).
 We define an integer $K(W)$ as the number of orbits for the $W$-action on $\mathcal{M}(W)$, i.e. $K(W) = \# ( \mathcal{M}(W) / W  ) $.
\end{defi}

An element of $\mathcal{M}(W)$ can be seen as a complete flag of $V$. Thus we can rephrase the definition:
$K(W)$ is the number of $W$-orbits of complete flags in $V$ where each element of the flag is a fixed point subspace
of some $w\in W$. 

Let us introduce further notations (see \cite{bjorner,humphreys}).
We recall that the complement in $V$ of the reflecting hyperplanes is divided into connected regions called {\it chambers},
and $W$ acts simply transitively on the chambers. Let $H_1,\dots,H_n$ be the reflecting hyperplanes that enclose 
one particular chamber $R_0$, the fundamental chamber.
Then the corresponding orthogonal reflections $s_1,\dots,s_n$ form a set $S$ 
of simple generators for $W$. According to this choice, there is a longest element $w_0$ (the unique group element
that maximizes the length function). 
For any $i$, let $W_{(i)} \subset W$ be the (standard maximal parabolic) subgroup generated by the $s_j$ with $j\neq i$.
If $s\in S$, we also denote $W_{(s)}=W_{(i)}$ if $s=s_i$.
An alternative description is that, if we define a line
\begin{equation} \label{defli}
  L_i = \bigcap\limits_{ \substack{ 1\leq j \leq n \\[1mm] j\neq i } } H_j ,
\end{equation}
then $w\in W_{(i)}$ if and only if $w(v)=v$ for any $v\in L_i$.
The lines $L_i$ are exactly those in $\mathcal{P}(W) $ that are incident to the fondamental chamber $R_0$.


For each line $L\in\mathcal{P}(W)$, we define two subgroups of $W$, 
respectively the stabilizer and the pointwise stabilizer: 
\begin{align*}
   \stab(L) &= \big\{ w\in W \, : \, w(L)=L \big\}, \\
   \stab^*(L)  &= \big\{ w\in W \, : \, \forall x \in L, \, w(x)=x  \big\}.
\end{align*}
Note that $\stab^*(L)$ is a subgroup of $\stab(L)$ with index either 1 or 2.
The group $\stab^*(L)$ is generated by the reflections it contains and is
itself a real reflection group, its reflecting hyperplanes being those 
of $W$ containing $L$. So we can identify $\mathcal{P}(\stab^*(L))$ with the interval $[V,L]\subset\mathcal{P}(W)$.

\section{\texorpdfstring{The general method}{The general method}}

\label{method}

We describe how the integer $K(W)$ can be computed inductively. To begin, in the reducible case we have:

\begin{prop} \label{product}
Let $W_1$ and $W_2$ be two Coxeter groups of respective ranks $m$ and $n$, then 
\[
  K(W_1\times W_2) = \binom {m+n}{m} K(W_1)K(W_2).
\]
\end{prop}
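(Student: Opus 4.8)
The plan is to reduce everything to the structure of $\mathcal{P}(W_1\times W_2)$ as a product poset, and then to a standard count of maximal chains in a product of graded posets while tracking the group action.

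First I would describe the lattice. Writing $V=V_1\oplus V_2$ as an orthogonal direct sum with $W_i$ acting on $V_i$ and trivially on the complement, any reflection of $W_1\times W_2$ has the form $(t,\mathrm{id})$ or $(\mathrm{id},t)$ with $t$ a reflection of $W_1$ or $W_2$ (its fixed-point subspace must have codimension $1$, which forces one component to be the identity). Hence every reflecting hyperplane is $H\oplus V_2$ or $V_1\oplus H'$ with $H,H'$ reflecting hyperplanes of $W_1,W_2$, and taking intersections shows that the elements of $\mathcal{P}(W_1\times W_2)$ are exactly the subspaces $\pi_1\oplus\pi_2$ with $\pi_i\in\mathcal{P}(W_i)$ — here one uses that $V_i\in\mathcal{P}(W_i)$, being the empty intersection (the minimal element). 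Reverse inclusion becomes the product order and $\operatorname{rank}(\pi_1\oplus\pi_2)=\operatorname{rank}\pi_1+\operatorname{rank}\pi_2$, so $\mathcal{P}(W_1\times W_2)\cong\mathcal{P}(W_1)\times\mathcal{P}(W_2)$ as graded posets, compatibly with the $W_1\times W_2$-action (where $W_i$ acts on the $i$-th factor).

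Next I would analyze $\mathcal{M}(W_1\times W_2)$. Writing a maximal chain $C_0<\cdots<C_{m+n}$ in the product poset as $C_k=(A_k,B_k)$, at each step exactly one coordinate increases its rank by $1$; recording at each step which coordinate moves gives a word in $\{1,2\}^{m+n}$ with $m$ ones and $n$ twos, i.e.\ one of the $\binom{m+n}{m}$ shuffles. Together with this shuffle, the chain is equivalent to the data of the sequence of values $A_k$ with repetitions removed (a maximal chain of $\mathcal{P}(W_1)$, since $A_0=V_1$ and $A$ jumps exactly $m$ times) and the analogous sequence of $B_k$ (a maximal chain of $\mathcal{P}(W_2)$). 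This sets up a bijection $\mathcal{M}(W_1\times W_2)\cong\mathrm{Sh}(m,n)\times\mathcal{M}(W_1)\times\mathcal{M}(W_2)$, where $\mathrm{Sh}(m,n)$ is the $\binom{m+n}{m}$-element set of shuffles.

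Finally, the $W_1\times W_2$-action. An element $(w_1,w_2)$ preserves ranks and acts coordinatewise, so it does not change which coordinate moves at each step: the shuffle type is invariant, and the two chain-components are transformed independently through $w_1$ and $w_2$. Thus the bijection above is $(W_1\times W_2)$-equivariant for the trivial action on $\mathrm{Sh}(m,n)$, giving $\mathcal{M}(W_1\times W_2)/(W_1\times W_2)\cong\mathrm{Sh}(m,n)\times(\mathcal{M}(W_1)/W_1)\times(\mathcal{M}(W_2)/W_2)$; taking cardinalities yields the claimed formula. The only delicate point is exactly this last equivariance check — that the shuffle type is genuinely fixed and the components transform independently — but it follows immediately from the compatible product structures on the poset and on the group.
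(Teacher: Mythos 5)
Your proof is correct and follows essentially the same route as the paper's: identify $\mathcal{P}(W_1\times W_2)$ with the product poset, decompose a maximal chain into a shuffle together with a pair of maximal chains of the factors, and observe that this bijection is equivariant with the shuffle type fixed. You simply spell out in more detail the identification of the lattice and the equivariance check, which the paper states without elaboration.
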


\begin{proof}
First, note that there is natural identification 
$\mathcal{P}(W_1)\times \mathcal{P}(W_2) = \mathcal{P}(W_1\times W_2) $.
The idea is to shuffle an element of $\mathcal{M}(W_1)$ with one of $\mathcal{M}(W_2)$ and the details are as follows.
Let $(x_0,\dots,x_m) \in \mathcal{M}(W_1)$ and $(y_0,\dots,y_n) \in \mathcal{M}(W_2)$.
By elementary properties of the product order, we can form an element $C \in \mathcal{M}(W_1\times W_2)$
by considering a sequence
\[
  C = ((x_{i_0},y_{j_0}) , \dots , (x_{i_{m+n}},y_{j_{m+n}}) )
\]
where the indices are such that $i_0=j_0=0$, $i_{m+n}=m$, $j_{m+n}=n$, and for $0\leq k <m+n$:
\begin{itemize}
 \item either $i_{k+1}=i_{k}$ and $j_{k+1}=j_k+1$, 
 \item or $i_{k+1}=i_{k}+1$ and $j_{k+1}=j_k$.
\end{itemize}
If $I$ denotes the set of possible choices for the indices $i_k$ and $j_k$, this defines a bijection 
\[
  I \times \mathcal{M}(W_1)\times \mathcal{M}(W_2) \to \mathcal{M}(W_1\times W_2).
\]
Since the bijection commutes with the action of $W_1\times W_2$ and $\# I = \binom{m+n}{m}$,
the result is proved.
\end{proof}

We suppose now that $W$ is irreducible. 
A natural approach to find $K(W)$ is to distinguish the maximal chains according to the coatom they contain 
(in terms of complete flags, we distinguish them according to the line they contain).
Doing the same thing at the level of orbits will lead to Proposition~\ref{induct} below.

Recall that we can identify $\mathcal{P}(\stab^*(L))$ with $[V,L]\subset \mathcal{P}(W)$.
There is also a natural way to see $\mathcal{M}(\stab^*(L))$ as a subset of $\mathcal{M}(W)$, namely
$(C_0,\dots,C_{n-1})\in \mathcal{M}(\stab^*(L))$ is identified with $(C_0,\dots,C_{n-1},\{0\})$.
Clearly, $[V,L]$ is stable by the action of $\stab(L)$ and this extends
to an action of $\stab(L)$ on $\mathcal{M}(\stab^*(L))$. With this at hand, we have:

\begin{prop} \label{induct}
Let $ \mathcal{L} \subset \mathcal{P}(W)$ be a set of orbit representatives for the action of $W$ on lines in $\mathcal{P}(W)$,
then:
\[
  K(W) = \sum_{ L\in\mathcal{L} }  \# \big( \mathcal{M}( \stab^*(L) ) / \stab(L) \big).
\]
\end{prop}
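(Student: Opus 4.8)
The plan is to stratify $\mathcal{M}(W)$ according to the coatom of each maximal chain, then pass to $W$-orbits one stratum at a time and recognize each resulting piece as a quotient by $\stab(L)$. Since $W$ is irreducible, $\mathcal{P}(W)$ has the unique minimal element $V$ (rank $0$) and the unique maximal element $\{0\}$ (rank $n$), so for a maximal chain $C=(C_0,\dots,C_n)$ the coatom $C_{n-1}$ is a line, and the structure of $\stab^*(C_{n-1})$ makes $(C_0,\dots,C_{n-1})$ a maximal chain of the interval $[V,C_{n-1}]=\mathcal{P}(\stab^*(C_{n-1}))$. Writing $\mathcal{M}_L$ for the set of $C\in\mathcal{M}(W)$ with $C_{n-1}=L$, deleting the final term $\{0\}$ is then a bijection $\mathcal{M}_L\to\mathcal{M}(\stab^*(L))$, inverse to the inclusion $\mathcal{M}(\stab^*(L))\hookrightarrow\mathcal{M}(W)$ recalled before the statement; moreover it is equivariant for $\stab(L)$, which preserves $\mathcal{M}_L$ because it fixes $L$ setwise.

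First I would observe that the $W$-action on $\mathcal{M}(W)$ is compatible with the $W$-action on lines, since $w$ carries $\mathcal{M}_L$ onto $\mathcal{M}_{w(L)}$. Grouping the lines into $W$-orbits, the subsets $\mathcal{N}_L:=\bigsqcup_{L'\in W\cdot L}\mathcal{M}_{L'}$, for $L$ ranging over the given transversal $\mathcal{L}$, therefore partition $\mathcal{M}(W)$ into $W$-stable pieces, whence
\[
  K(W)=\#\bigl(\mathcal{M}(W)/W\bigr)=\sum_{L\in\mathcal{L}}\#\bigl(\mathcal{N}_L/W\bigr).
\]

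Next, for fixed $L\in\mathcal{L}$ I would run the standard transversal argument on the $W$-set $\mathcal{N}_L$ with distinguished subset $\mathcal{M}_L$: the inclusion $\mathcal{M}_L\hookrightarrow\mathcal{N}_L$ meets every $W$-orbit (if $C\in\mathcal{N}_L$ has coatom $w(L)$, then $w^{-1}\cdot C\in\mathcal{M}_L$ lies in the same orbit), and for $C,C'\in\mathcal{M}_L$ one has $C\sim_W C'$ if and only if $C\sim_{\stab(L)}C'$ (any $w$ with $w\cdot C=C'$ must send $L=C_{n-1}$ to $L=C'_{n-1}$, so $w\in\stab(L)$). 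Hence $\#(\mathcal{N}_L/W)=\#(\mathcal{M}_L/\stab(L))$, which equals $\#(\mathcal{M}(\stab^*(L))/\stab(L))$ by the equivariant bijection of the first paragraph; summing over $\mathcal{L}$ yields the claimed identity.

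I do not expect a real obstacle here — the proof is a routine orbit decomposition. The only points requiring care are the structural facts already established before the statement: that in the irreducible case the coatoms of $\mathcal{P}(W)$ are precisely its lines, that $\stab^*(L)$ is a reflection group with $\mathcal{P}(\stab^*(L))=[V,L]$, and that $\stab(L)$ acts on $\mathcal{M}(\stab^*(L))$. These are exactly what legitimizes both the stratification by coatom and the reduction from $W$-orbits to $\stab(L)$-orbits.
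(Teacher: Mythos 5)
Your proposal is correct and follows essentially the same route as the paper: both stratify maximal chains by the $W$-orbit of the coatom (a line), identify the chains with coatom $L$ with $\mathcal{M}(\stab^*(L))$, and observe that two such chains are $W$-equivalent precisely when they are $\stab(L)$-equivalent. The paper phrases this as a map $f:\mathcal{M}(W)/W\to\mathcal{L}$ with fibers in bijection with $\mathcal{M}(\stab^*(L))/\stab(L)$, which is your orbit decomposition in slightly different packaging.
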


\begin{proof}
If $C = (C_0,\dots,C_n)\in \mathcal{M}(W)$, there is a unique $L\in\mathcal{L}$ such that the coatom $C_{n-1}$ and $L$ are in the 
same $W$-orbit. Moreover, $L$ only depends on the $W$-orbit of $C$, so this defines a map $f:\mathcal{M}(W) / W \to  \mathcal{L} $.

With the discussion above in mind, we identify $\mathcal{M}(\stab^*(L))$ with the set of chains $C=(C_0,\dots,C_n)\in\mathcal{M}(W)$ 
satisfying $C_{n-1}=L$. 
Each element of $f^{-1}(L)$ is a $W$-orbit that can be represented by an element of $\mathcal{M}(\stab^*(L))$,
and two elements of $\mathcal{M}(\stab^*(L))$ are in the same $W$-orbit if and only if they are in the same $\stab(L)$-orbit.
This permits to define a bijection between $f^{-1}(L)$ and $\mathcal{M}( \stab^*(L) ) / \stab(L)$.
Now, we can write:
\[
  K(W) = \# (\mathcal{M}(W)/W ) =  \sum_{L\in \mathcal{L} }  \# ( f^{-1}(L) ) =  \sum_{ L\in\mathcal{L} }  \# \big( \mathcal{M}( \stab^*(L) ) / \stab(L) \big),
\]
as announced.
\end{proof}

Now, let us describe how to find the set $\mathcal{L}$ of orbit representatives for the action of $W$ on lines in $\mathcal{P}(W)$. 
We can use the lines $L_i$ defined in Equation~\eqref{defli} from the previous section.

\begin{prop} \label{wli}
Each line $L\in\mathcal{P}(W)$ can be written $w(L_i)$ for some $w\in W$ and $1\leq i \leq n$.
If $w\in W$ and $i\neq j$, then $w(L_i)=L_j$ implies $w_0(L_i)=L_j$.
\end{prop}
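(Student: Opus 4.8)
The plan is to prove the two assertions separately. For the first assertion, I would argue that every line $L\in\mathcal{P}(W)$ is incident to \emph{some} chamber: indeed, $L$ is a nonzero subspace not contained in any reflecting hyperplane containing it properly (it is an intersection of hyperplanes and has rank $n-1$), so a generic point $v\in L$ avoids all reflecting hyperplanes not containing $L$, hence lies in the closure of some chamber $R$. Since $W$ acts transitively on chambers, pick $w\in W$ with $w(R_0)=R$; then $w^{-1}(L)$ is a line incident to the fundamental chamber $R_0$, and by the remark after Equation~\eqref{defli} the lines in $\mathcal{P}(W)$ incident to $R_0$ are exactly $L_1,\dots,L_n$. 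Therefore $w^{-1}(L)=L_i$ for some $i$, i.e. $L=w(L_i)$.

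For the second assertion, suppose $w(L_i)=L_j$ with $i\neq j$. The key observation is that $L_i$ and $L_j$ are both incident to the fundamental chamber $R_0$: $L_i$ lies on the wall $\bigcap_{k\neq i}H_k$ of $R_0$, and similarly for $L_j$. Since $w(L_i)=L_j$, the line $L_i$ is incident to both $R_0$ and $w^{-1}(R_0)$. The plan is to show this forces $w^{-1}(R_0)$ to be the chamber $w_0(R_0)$ (equivalently $w=w_0$ up to the pointwise stabilizer of $L_i$), or more precisely that $w$ and $w_0$ have the same effect on $L_i$. The geometric picture: the chambers incident to a fixed line $L$ form a "circular" arrangement around $L$ (the reflecting hyperplanes containing $L$ cut the plane $L^\perp \cap (\text{something})$ into sectors, but here we should think in the quotient by $L$), and among those incident to $R_0$, the two "extreme" lines of $R_0$ that could be sent to a \emph{different} wall-line $L_j$ must be the ones swapped by the longest element. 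I would make this precise using the standard fact that $w_0$ sends $R_0$ to $-R_0$ when $W$ acts on its reflection representation, or rather the combinatorial fact that $w_0$ induces the unique nontrivial diagram automorphism permutation on the $L_i$ when such exists — so that $w_0(L_i)$ is the unique line among $L_1,\dots,L_n$, other than $L_i$ itself, that can be in the same $W$-orbit as $L_i$.

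Concretely, I would proceed as follows. First, note $\stab(L_i)\supset \stab^*(L_i)=W_{(i)}$, and $\stab(L_i)/\stab^*(L_i)$ has order $1$ or $2$ as stated in Section~\ref{def}. If $w(L_i)=L_j$ then $L_i$ and $L_j$ are conjugate lines; the number of $W$-orbits on the set $\{L_1,\dots,L_n\}$ equals the number of orbits of the diagram automorphism group generated by $w_0$ acting on the simple generators — this is the classical description of the $-w_0$ permutation of the Dynkin diagram. Since $L_i$ is determined by the set $S\setminus\{s_i\}$ (as $\stab^*(L_i)=W_{(i)}$), and two parabolics $W_{(i)}$, $W_{(j)}$ are $W$-conjugate (for $i\neq j$) if and only if $s_i$ and $s_j$ are swapped by the $-w_0$ automorphism, we get: $w(L_i)=L_j$ with $i\neq j$ implies $s_j = w_0 s_i w_0$, which is exactly $L_j = w_0(L_i)$. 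I expect the main obstacle to be justifying cleanly that conjugacy of the lines $L_i,L_j$ is equivalent to conjugacy of the maximal parabolics $W_{(i)}, W_{(j)}$ and then to the $-w_0$ diagram symmetry; this uses the bijection $L_i \leftrightarrow W_{(i)}$ together with the fact that a maximal parabolic is its own normalizer's "pointwise-stabilizer part," so I would want to cite or reprove the statement that $N_W(W_{(i)})$-conjugacy of standard maximal parabolics is controlled by the longest-element automorphism. An alternative, more self-contained route avoiding parabolic-conjugacy lore: observe directly that $R_0$ and $w^{-1}(R_0)$ both touch the line $L_i$; walk along a path of chambers around $L_i$ from $R_0$; the total "angle" around $L_i$ is accounted for by the reflections in $\stab(L_i)$, and the only way to land on a chamber touching a wall-line $L_j\neq L_i$ of $R_0$ again is to go "halfway around," which is the action of $w_0$ — this is essentially the observation that $w_0$ is the unique longest element and reverses the fundamental chamber.
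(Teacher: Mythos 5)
Your proof of the first assertion is fine and is essentially the paper's argument: pick a generic point of $L$, hence a chamber $R$ incident to $L$, map $R_0$ to $R$ by simple transitivity, and use that the lines of $\mathcal{P}(W)$ incident to $R_0$ are exactly the $L_i$. The problem is the second assertion, where neither of your two routes closes the argument. Route 1 rests on the claim that $W_{(i)}$ and $W_{(j)}$ are $W$-conjugate if and only if $s_i$ and $s_j$ are swapped by $s\mapsto w_0sw_0$. Since $L_i=\fix(W_{(i)})$, conjugacy of these parabolics is \emph{equivalent} to the lines $L_i,L_j$ lying in one $W$-orbit, so this claim is precisely a restatement of the proposition you are trying to prove; invoking it without an independent proof (e.g.\ Howlett's results on normalizers of parabolic subgroups, which you only gesture at citing) makes the argument circular. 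You yourself flag this as ``the main obstacle,'' and it is not a gap one can wave past.

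Route 2 is closer to what the paper actually does, but the decisive mechanism is missing: the case analysis on \emph{half-lines}. The paper splits $L_i$ into $L_i^+$ (incident to $R_0$) and $L_i^-$, and likewise for $L_j$. If $w(L_i^+)=L_j^+$, then $R_0$ and $w(R_0)$ are both incident to $L_j^+$; since $W_{(j)}$ acts simply transitively on such chambers, $w\in W_{(j)}$ fixes $L_j$ pointwise, contradicting $w(L_i)=L_j$ with $i\neq j$. Hence $w(L_i^+)=L_j^-$, and since $L_j^-$ is incident to both $w(R_0)$ and $-R_0=w_0(R_0)$, transitivity of $W_{(j)}$ on chambers incident to $L_j^-$ yields $u\in W_{(j)}$ with $uw=w_0$, whence $w_0(L_i^+)=u(L_j^-)=L_j^-$. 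Your ``walk halfway around $L_i$'' picture never distinguishes the two half-lines, never rules out the first case, and never produces the element $u$ that upgrades $w$ to $w_0$; as written it is an intuition, not a proof. You do have the right ingredients in hand ($w_0(R_0)=-R_0$ and the transitive action of the pointwise stabilizer on the chambers around a half-line), so the fix is to carry out exactly this two-case analysis.
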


Similar considerations appeared in the work of Armstrong, Reiner and Rhoades \cite{armstrong2},
in the context of $W$-parking functions.
Still, it is reasonable to include a short proof here.

\begin{proof}
Let us split the line $L$ in two half-lines $L^+$ and $L^-$, and let $R$ be a chamber incident to $L^+$. 
We also split $L_i$ in two half-lines $L_i^+$ and $L_i^-$, where $L_i^+$ is the one incident to $R_0$.
The group $W$ acts simply transitively on the chambers, so there is $w\in W$ such that $w(R_0)=R$.
Then $w^{-1}(L^+)$ is incident to $R_0$, so there is $i$ such that $w^{-1}(L^+)=L_i^+$, and
consequently $L^+=w(L_i^+)$ and $L=w(L_i)$.

Now, suppose we have $i\neq j$ and $w(L_i)=L_j$. 
We have either $w(L_i^+)=L_j^+$ or $w(L_i^+)=L_j^-$ (where $L_j^+$ and $L_j^-$ are defined in the same way as with $L_i$).
In the first case, $R_0$ and $w(R_0)$ are both incident to $L_j^+$. This implies $w(L_j^+)=L_j^+$
(note that $W_{(j)}$ acts simply transitively on the set of chambers incident to $L_j^+$),
but this is a contradiction with $i\neq j$ and $w(L_i)=L_j$. So we have $w(L_i^+)=L_j^-$.
Since $L_j^-$ is incident to both $-R_0$ and $w(R_0)$, there is $u\in W_{(j)}$ such that 
$uw(R_0)=-R_0$, i.e. $uw=w_0$. Then, we have $w_0(L_i^+) = uw(L_i^+) = u(L_j^-) = L_j^-$.
So $w_0(L_i)=L_j$.
\end{proof}

From the definition of $L_i$ in Equation~\eqref{defli}, $w_0(L_i)=L_j$ is equivalent to $w_0(H_i)=H_j$, 
which is also equivalent to $w_0s_iw_0=s_j$. Elementary properties of the longest element show that the 
map defined on the simple generators by $s \mapsto w_0 s w_0$ is an involutive automorphism of the Coxeter 
graph. One can also show that this automorphism is the identity if and only if the exponents of the 
group are all odd, see \cite[Exercise 4.10]{bjorner}. So the set $\mathcal{L}$ can be obtained by taking
$\{L_1,\dots,L_n\}$, quotiented by the action of $w_0$ which can be described in a precise way.


We have $\stab^*(L_i)=W_{(i)}$, the standard maximal parabolic subgroup. To identify the group $\stab(L_i)$, 
we have the following:

\begin{prop} \label{propliwi}
Either $\stab(L_i)=W_{(i)}$, or $\stab(L_i) = < W_{(i)} , w_0 >$.
\end{prop}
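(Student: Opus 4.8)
The plan is to analyze which elements of $W$ can stabilize the line $L_i$ as a set, using the description of $\stab^*(L_i)=W_{(i)}$ together with the rigidity statement of Proposition~\ref{wli}. Since $\stab^*(L_i)$ has index $1$ or $2$ in $\stab(L_i)$, it suffices to decide whether the stabilizer is strictly bigger than $W_{(i)}$, and if so, to exhibit a single additional element — and I claim $w_0$ is the natural candidate.

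First I would recall the two half-lines $L_i^+$ and $L_i^-$, with $L_i^+$ incident to the fundamental chamber $R_0$, and observe that an element $w\in\stab(L_i)$ either fixes $L_i^+$ (pointwise, hence lies in $W_{(i)}=\stab^*(L_i)$, using that $W_{(i)}$ acts simply transitively on chambers incident to $L_i^+$) or swaps $L_i^+$ and $L_i^-$. So $\stab(L_i)\neq W_{(i)}$ exactly when some $w$ sends $L_i^+$ to $L_i^-$. In that case $L_i^-$ is incident to both $-R_0$ and $w(R_0)$; arguing as in the proof of Proposition~\ref{wli}, there is $u\in W_{(i)}$ with $uw(R_0)=-R_0$, i.e. $uw=w_0$, and moreover $w_0(L_i^+)=uw(L_i^+)=u(L_i^-)=L_i^-$, so $w_0\in\stab(L_i)$ and $w=u^{-1}w_0\in\langle W_{(i)},w_0\rangle$. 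This shows that if the stabilizer exceeds $W_{(i)}$ then it equals $\langle W_{(i)},w_0\rangle$, and conversely that $w_0\in\stab(L_i)$ in that situation; combined with the index-$\leq 2$ remark, the two stated cases are exhaustive and mutually exclusive.

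The one point that needs a little care — and is the main obstacle — is the claim that an element fixing $L_i^+$ as a half-line actually fixes $L_i$ pointwise, i.e. lies in $W_{(i)}$ rather than merely normalizing the line. This follows because such an element fixes a point of $R_0$'s incident half-line and hence fixes $R_0$ itself (as $W$ acts simply transitively on chambers, and $W_{(i)}$ is precisely the chamber-stabilizer structure adapted to $L_i^+$ via the description $w\in W_{(i)}\iff w|_{L_i}=\mathrm{id}$ from Section~\ref{def}); more directly, a half-line and the fundamental chamber together pin down enough of the geometry that the stabilizer of $L_i^+$ as a ray is exactly $W_{(i)}$. Once that is in place, the dichotomy is immediate and no computation beyond the reflection-group bookkeeping already used for Proposition~\ref{wli} is required.
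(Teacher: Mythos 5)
Your proof is correct and follows essentially the same route as the paper: any element of $\stab(L_i)$ outside $W_{(i)}$ must swap the half-lines $L_i^+$ and $L_i^-$, and composing with a suitable $u\in W_{(i)}$ taking $w(R_0)$ to $-R_0$ yields $uw=w_0$, which settles the dichotomy via the index-$\leq 2$ observation. One minor caveat: your parenthetical claim that fixing a point of the half-line forces fixing $R_0$ is not right (every element of $W_{(i)}$ fixes $L_i$ pointwise without fixing $R_0$), but the argument you actually need and also state --- an orthogonal map preserving the ray $L_i^+$ fixes $L_i$ pointwise and hence lies in $\stab^*(L_i)=W_{(i)}$ --- is the correct one.
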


\begin{proof}
Suppose there is $w \in \stab(L_i)$ with $w\notin W_{(i)}$, which means that $w(L_i^+) = L_i^-$.
So $w(R_0)$ is incident to $L_i^-$. Since $W_{(i)}$ acts transitively on the chambers incident to $L_i^-$,
there is $u\in W_{(i)}$ with $uw(R_0) = -R_0$, i.e. $uw=w_0$. It follows $w_0 \in \stab(L_i)$ with $w_0 \notin W_{(i)}$. 
\end{proof}



Since $W_{(i)}$ has rank $n-1$, by induction we can assume we already know the integer $K(W_{(i)})$,
which is useful in some situations.

\begin{prop} \label{invohalf} With $W$, $w_0$, and $L_i \in \mathcal{L}$ as above, we have:
\begin{itemize}
 \item If $w_0 s_i w_0 \neq s_i$, then 
\[
  \# ( \mathcal{M}( W_{(i)}) / \stab(L_i) ) = K( W_{(i)} ).
\]
 \item If $w_0 s_i w_0 = s_i$, and there is $ u \in W_{(i)}$ such that $w_0 s_j w_0 = u s_j u $ for any $j\neq i$, then
\[
  \# ( \mathcal{M}( W_{(i)} ) / \stab(L_i) ) = K( W_{(i)} ).
\]
 \item If $w_0 s_i w_0 = s_i$, and the map $s\mapsto w_0 s w_0$ permutes nontrivially the connected components of the Coxeter graph of $W_{(i)}$, then:
\[
  \# ( \mathcal{M}(W_{(i)}) / \stab(L_i) ) = \frac 12 K( W_{(i)} ).
\]
\end{itemize}
\end{prop}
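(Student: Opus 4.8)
The plan is to handle the three bullets separately. In each, I first pin down $\stab(L_i)$ using Proposition~\ref{propliwi}, and then, whenever $\stab(L_i)$ strictly contains $W_{(i)}=\stab^*(L_i)$, I analyse the involution induced by $w_0$ on the finite set $\mathcal{M}(W_{(i)})/W_{(i)}$, whose cardinality is $K(W_{(i)})$ by the definition of $K$ applied to the reflection group $W_{(i)}$ (recall $\mathcal{P}(W_{(i)})=[V,L_i]$). In the first case, $w_0s_iw_0\neq s_i$ means $w_0(H_i)\neq H_i$, hence $w_0(L_i)=L_j$ for some $j\neq i$ and $w_0\notin\stab(L_i)$; by Proposition~\ref{propliwi} this forces $\stab(L_i)=W_{(i)}$, so the count is $K(W_{(i)})$.

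In the last two bullets $w_0(L_i)=L_i$, so $w_0\in\stab(L_i)$, and Proposition~\ref{propliwi} leaves only two possibilities: either $\stab(L_i)=W_{(i)}$ (the count is then $K(W_{(i)})$, consistent with the second bullet), or $\stab(L_i)=W_{(i)}\sqcup w_0W_{(i)}$, in which case the count is $K(W_{(i)})$ or $\tfrac12 K(W_{(i)})$ according to whether $w_0$ acts trivially or fixed-point-freely on $\mathcal{M}(W_{(i)})/W_{(i)}$. For the second bullet the hypothesis says exactly that conjugation by $w_0$ restricts on $W_{(i)}$ to the inner automorphism given by $u\in W_{(i)}$, so $g:=u^{-1}w_0$ centralizes $W_{(i)}$. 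The crucial remark is that any $g\in GL(V)$ centralizing a reflection group $U$ fixes each reflecting hyperplane of $U$ setwise (for a reflection $t\in U$ one has $g\,\fix(t)=\fix(gtg^{-1})=\fix(t)$), hence fixes every element of $\mathcal{P}(U)$ setwise (each flat being the intersection of the reflecting hyperplanes through it), hence acts trivially on $\mathcal{M}(U)$. Applied to $g$ and $U=W_{(i)}$ this gives $w_0\cdot C=u\cdot C$ for all $C$, so $w_0$ acts trivially on $\mathcal{M}(W_{(i)})/W_{(i)}$ and the count is $K(W_{(i)})$.

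For the third bullet, write $W_{(i)}=U_1\times\cdots\times U_r$ for its irreducible factors; conjugation by $w_0$ induces a nontrivial involution $\sigma$ of $\{1,\dots,r\}$, matched by an orthogonal decomposition $V=L_i\oplus V_1\oplus\cdots\oplus V_r$ with $w_0(V_k)=V_{\sigma(k)}$ and $\dim V_k=\dim V_{\sigma(k)}$. Since conjugation by an element of $W_{(i)}$ preserves every factor while conjugation by $w_0$ does not, $w_0\notin W_{(i)}$, so $\stab(L_i)=W_{(i)}\sqcup w_0W_{(i)}$. Using $\mathcal{P}(W_{(i)})=\prod_k\mathcal{P}(U_k)$, I attach to each maximal chain $C=(C_0,\dots,C_{n-1})$ its shuffle word $\mathrm{sw}(C)\in\{1,\dots,r\}^{\,n-1}$, with $\mathrm{sw}(C)_t=k$ when the cover $C_{t-1}\lessdot C_t$ takes place in the $k$-th coordinate. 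Then $\mathrm{sw}$ is constant on $W_{(i)}$-orbits, whereas $\mathrm{sw}(w_0\cdot C)_t=\sigma(\mathrm{sw}(C)_t)$. As each $U_k$ has positive rank, every letter of $\{1,\dots,r\}$ occurs in $\mathrm{sw}(C)$, and since $\sigma$ moves some letter we get $\mathrm{sw}(w_0\cdot C)\neq\mathrm{sw}(C)$; hence $w_0\cdot C$ lies in a $W_{(i)}$-orbit different from that of $C$. Thus the involution induced by $w_0$ on $\mathcal{M}(W_{(i)})/W_{(i)}$ is fixed-point-free and the count is $\tfrac12 K(W_{(i)})$.

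The main work is in the two nontrivial bullets: producing the ``a centralizer acts trivially on maximal chains'' lemma for the second (the subtle point being that $g=u^{-1}w_0$, although a priori unrelated to the reflection arrangement of $W_{(i)}$, nonetheless fixes all of $[V,L_i]$ setwise), and noticing the $W_{(i)}$-invariant shuffle word for the third. Once these are available, the bookkeeping with Proposition~\ref{propliwi} and with the product decomposition of Proposition~\ref{product} is routine, and the integrality of $\tfrac12 K(W_{(i)})$ in the last bullet is automatic since a fixed-point-free involution lives on a set of even cardinality.
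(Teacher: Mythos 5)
Your proof is correct and follows essentially the same route as the paper: the first two bullets are handled identically (the paper states tersely that the action of $u$ on $\mathcal{M}(W_{(i)})$ coincides with that of $w_0$, which your centralizer lemma for $g=u^{-1}w_0$ makes precise, noting that in all applications $u$ is an involution so the hypothesis $w_0s_jw_0=us_ju$ is indeed conjugation), and the third bullet rests on the same idea of exhibiting a $W_{(i)}$-invariant of chains that $w_0$ necessarily alters. The one genuine difference is in that third bullet: the paper only details the case of two components and uses the component-ranks of $C_1$, declaring the general case ``similar'', whereas your shuffle word treats any number of components at once --- which is actually needed, e.g.\ for the parabolic $A_2\times A_1\times A_2$ of $E_6$, where the involution fixes the $A_1$ factor and the rank type of $C_1$ alone would not detect the swap.
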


\begin{proof}

If $w_0 s_i w_0 \neq s_i$, then $w_0\notin \stab(L_i) $, hence $\stab(L_i)=W_{(i)}$ using Proposition~\ref{propliwi}.
This proves the first point.

Suppose $w_0 s_i w_0 = s_i$ and there exists $u$ as above. It means that the action of $u$ on $\mathcal{M}(W_{(i)})$
is the same as the action of $w_0$. In either of the two cases given in Proposition~\ref{propliwi}, we find that
the $\stab(L_i)$-orbits are exactly the $W_{(i)}$-orbits. This proves the second point.

As for the third point, we suppose there are only two connected components in the Coxeter graph of $W_{(i)}$, the 
general case being similar. Let us write $W_{(i)} = W_1\times W_2 $.
We have seen in the proof of Proposition~\ref{product} that the elements of $\mathcal{M}(W_{(i)})$ are obtained
by ``shuffling'' two elements of $\mathcal{M}(W_1)$ and $\mathcal{M}(W_2)$.
So if $C=(C_0,\dots,C_{n-1}) \in \mathcal{M}(W_{(i)})$, the element $C_1$ is a pair 
$(C_1', C_1'') \in \mathcal{P}(W_1)\times \mathcal{P}(W_2)$ where
the respective ranks of $C_1'$ and $C_1''$ are either 0 and 1, or 1 and 0.
These two conditions are preserved by the action of $W_{(i)}$, and are reversed by the action of $w_0$.
So the action of $w_0$ on $\mathcal{M}( W_{(i)} ) / W_{(i)} $ has no fixed point and each orbit has cardinality 2.
We can write:
\[
  \# ( \mathcal{M}(W_{(i)}) / \stab(L_i) ) = \# (( \mathcal{M}(W_{(i)}) / W_{(i)} ) / w_0 )
\]
and this proves the result.
\end{proof}

Let us summarize the situation. If $w_0$ is central in $W$, 
we can always apply the second case of Proposition~\ref{invohalf}, so that Proposition~\ref{induct} gives
\begin{equation} \label{summ1}
  K(W) = \sum_{s\in S} K(W_{(s)}),
\end{equation}
where each $W_{(s)}$ is a standard maximal parabolic subgroup of $W$. Furthermore, some of the terms are simplified 
using the product formula in Proposition~\ref{product}. In particular, this equation can be directly obtained from the Coxeter graph.

When $w_0$ is not central, the map $s\mapsto w_0 s w_0 $ is an involution on the set $S$ of simple generators and we need to 
distinguish the two-element orbits and the fixed points. Indeed, we have:
\begin{equation} \label{summ2}
  K(W) = \sum_{\substack{ \{s_i,s_j\} \subset S, \; s_i\neq s_j  \\[1mm]  w_0 s_i w_0 = s_j  } } K(W_{(i)}) + 
         \sum_{\substack{ s_i\in S  \\[1mm]  w_0 s_i w_0 = s_i } } \#( \mathcal{M}( W_{(i)} ) / \stab(L_i) ).
\end{equation}
Some terms in the first sum (respectively, the second sum) can be further simplified using Proposition~\ref{product}
(respectively, Proposition~\ref{invohalf}). 

Note that Proposition~\ref{invohalf} does not exhaust all the possibilities, so we do not have a general solution to find 
all the terms $\#( \mathcal{M}( W_{(i)} ) / \stab(L_i) )$ in the second sum of Equation~\eqref{summ2}. As we will
see in the next section, the only case that cannot be treated directly will appear when $W=D_n$ with $n$ odd.

\section{The case by case resolution}

\label{results}

We follow the traditional notation for the classification of finite irreducible Coxeter groups, see \cite{bjorner}.
We will denote $a_n=K(A_n)$, $b_n=K(B_n)$, $d_n=K(D_n)$. 
It will be convenient to take the conventions that $A_0=B_0=D_0$
(the trivial group with rank 0), $A_1=B_1$, $D_2=A_1\times A_1$ and $D_3=A_3$.

\begin{prop}[See \cite{bjorner}, Exercise 4.10]  \label{involcase}
In the groups $I_2(m)$ for $m$ even, $B_n$, $D_{n}$ for $n$ even, $G_2$, $H_3$, $H_4$, $E_7$, and $E_8$, the longest element is central.
In the other groups, i.e. $I_2(m)$ for $m$ odd, $A_n$, $D_n$ for $n$ odd, and $E_6$, the map $s\mapsto w_0 s w_0$ is the unique 
nontrivial automorphism of the Coxeter graph.
\end{prop}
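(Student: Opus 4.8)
The plan is to translate the statement into a question about the linear action of $w_0$ on $V$, and then read it off from the classification data. Recall from the discussion preceding the proposition that $s\mapsto w_0 s w_0$ is always an involutive automorphism of the Coxeter graph of an irreducible $W$. I would first record two equivalences. On the one hand, $s\mapsto w_0 s w_0$ is trivial if and only if $w_0$ commutes with every element of $S$, i.e.\ if and only if $w_0$ is central in $W$. On the other hand, since $W$ is irreducible the module $V$ is irreducible, so by Schur's lemma a central $w_0$ acts as a scalar; as $w_0^2=1$ and $w_0\neq 1$ this forces $w_0=-\mathrm{id}_V$, and conversely $-\mathrm{id}_V$ is central whenever it lies in $W$. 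Hence the dichotomy of the proposition is exactly the dichotomy ``$w_0=-\mathrm{id}_V$'' versus ``$w_0\neq-\mathrm{id}_V$'', and in the latter case $s\mapsto w_0 s w_0$ is some nontrivial graph automorphism.

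Next I would settle the ``unique nontrivial automorphism'' wording. For each group in the second list — $I_2(m)$ with $m$ odd, $A_n$ with $n\geq 2$, $D_n$ with $n$ odd (so $n\geq 5$, since $D_3=A_3$ and $D_4$ is excluded), and $E_6$ — the full automorphism group of the Coxeter graph is $\mathbb{Z}/2\mathbb{Z}$; so once $s\mapsto w_0 s w_0$ is known to be nontrivial it is automatically \emph{the} nontrivial automorphism. This also explains why $D_4$, whose graph has automorphism group $\mathfrak{S}_3$, has to land in the first list, and why $A_1=B_1$ is treated under $B_1$ (its one-node graph admitting no nontrivial automorphism).

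It then remains to decide, group by group, whether $w_0=-\mathrm{id}_V$. For this I would use the criterion already recalled in the text, namely $w_0=-\mathrm{id}_V$ if and only if all exponents of $W$ are odd (cf.\ \cite[Exercise~4.10]{bjorner}); the easy half is that $-\mathrm{id}_V$ sends every positive root to a negative one, so whenever $-\mathrm{id}_V\in W$ it has length equal to the number of positive roots and hence equals $w_0$. Tabulating exponents finishes the proof: $A_n$ has exponents $1,\dots,n$ (all odd only for $n=1$, absorbed into $B_1$); $B_n$ has $1,3,\dots,2n-1$, all odd; $D_n$ has $1,3,\dots,2n-3$ together with $n-1$, which are all odd exactly when $n$ is even; $I_2(m)$ has exponents $1,m-1$, all odd exactly when $m$ is even; and among the exceptional groups $G_2,F_4,H_3,H_4,E_7,E_8$ have all exponents odd, whereas $E_6$ has $4$ and $8$ among its exponents. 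For the classical families one can bypass the exponent table entirely: $B_n$ contains the total sign change $v\mapsto -v=-\mathrm{id}_V$; $D_n$ contains it precisely when the number of coordinates $n$ is even; and in $\mathfrak{S}_{n+1}$ acting on $\{\sum v_i=0\}$ the longest element $i\mapsto n+2-i$ equals $-\mathrm{id}_V$ only in dimension one.

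The step I expect to be the real obstacle is the exceptional groups $H_3,H_4,F_4,E_6,E_7,E_8$: there is no structural shortcut for them comparable to the permutation and signed-permutation models, so one must either invoke the tabulated exponents or carry out an explicit computation in the root system to see whether $-\mathrm{id}_V$ is realized. Everything else in the argument — the reduction to $w_0=-\mathrm{id}_V$, the $\mathbb{Z}/2\mathbb{Z}$ rigidity of the relevant graphs, and the infinite families — is either formal or elementary.
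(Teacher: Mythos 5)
Your proposal is correct. Note that the paper does not actually prove this proposition: it is stated with the attribution ``see \cite{bjorner}, Exercise 4.10'' and used as a known classification fact, the only supporting remark in the text being the observation (just before the proposition) that $s\mapsto w_0sw_0$ is always an involutive graph automorphism and that it is trivial if and only if all exponents are odd. Your argument fills in exactly the standard proof behind that citation: the equivalences ``$s\mapsto w_0sw_0$ trivial $\Leftrightarrow$ $w_0$ central $\Leftrightarrow$ $w_0=-\mathrm{id}_V$'' (the last via irreducibility of the reflection representation, or simply by noting that the $\pm1$-eigenspaces of a central involution are submodules), the fact that $-\mathrm{id}_V\in W$ forces $w_0=-\mathrm{id}_V$ since it negates all positive roots, the $\mathbb{Z}/2\mathbb{Z}$ rigidity of the relevant Coxeter graphs (which is what upgrades ``some nontrivial automorphism'' to ``the unique nontrivial automorphism''), and the case check via exponents or via the explicit signed-permutation and permutation models. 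All the data you quote (exponents of the classical and exceptional types, the automorphism groups of the graphs, the $D_4$ and $A_1$ caveats) is accurate, and your direct verification for $A_n$, $B_n$, $D_n$, $I_2(m)$ means only the six exceptional groups genuinely require tabulated information, which is unavoidable for a classification statement of this kind. So your proof is a legitimate, essentially self-contained substitute for the reference the paper leans on.
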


\subsection{\texorpdfstring{Case of $A_n$}{Case of A n}}

We already know that $a_n=T_n$, but let us check how to prove it with our method.
Here, $w_0$ is not central and $s\mapsto w_0sw_0$ reverses the $n$ vertices of the Coxeter graph.
There is a fixed point only if $n$ is odd, and it can be treated using the third case of Proposition~\ref{invohalf}.
So Equation~\eqref{summ2} gives, when $n\geq 2$:
\[
  a_n = \sum_{i=0}^{\lfloor n/2 \rfloor - 1 } \binom{n-1}{i} a_i a_{n-1-i} 
        + [n\text{ mod }2] \times \frac 12\binom{n-1}{(n-1)/2} a_{(n-1)/2}^2.
\]
(Here, $[n\text{ mod }2]$ is considered as the natural number 0 or 1.)
This can be rewritten as:
\begin{equation} \label{recan}
  a_n = \frac12 \sum_{i=0}^{n-1} \binom{n-1}{i} a_i a_{n-1-i}.
\end{equation}
Let us define
\[
  A(z) = \sum_{n\geq 0} a_n \frac{z^n}{n!}.
\]
Multiplying Equation~\eqref{recan} by $\frac{z^{n-1}}{(n-1)!}$ and summing over $n\geq2$ gives 
\[ 
  A'(z)-1 = \frac12( A(z)^2-1 ).  
\]
So $A(z)$ is the solution of the differential equation $A'(z) = \frac 12 (A(z)^2+1) $ with the initial value $A(0)=1$.
It can be checked that $A(z)=\tan(z)+\sec(z)$ is the solution, so that $a_n=T_n$.

\subsection{\texorpdfstring{Case of $B_n$}{Case of B n}}

In this group, the longest element is central. Equation~\eqref{summ1} together with the product formula gives:
\begin{equation} \label{recbn}
  b_n = \sum_{i=0}^{n-1}  \binom{n-1}i b_i a_{n-i-1}.
\end{equation}
Now, let 
\[
  B(z) = \sum_{n\geq 0} b_n \frac{z^n}{n!}. 
\]
Multiplying Equation~\eqref{recbn} by $\frac{z^{n-1}}{(n-1)!}$ and summing over $n\geq 1$ gives 
\[
  B'(z) = B(z) A(z).
\]
So $B(z)$ is the solution of the differential equation $B'(z)=B(z)A(z)$ with initial value $B(0)=1$. We can check that 
\[
  B(z) = \frac{1}{1-\sin(z)}
\]
is a solution. This function also satisfies $B(z)=A'(z)$, so that 
\[
  b_n = T_{n+1}. 
\]
A bijective proof of this will be given in \cite{josuat}.

\subsection{\texorpdfstring{Case of $D_n$}{Case of D n}}


When $n$ is even, the longest element of $D_n$ is central and Equation~\eqref{summ1} gives:
\begin{equation} \label{recdneven}
  d_n = 2 a_{n-1} + \sum_{ 2\leq i \leq n-1 } \binom{n-1}{i}  d_i a_{n-1-i}.
\end{equation}

In the case when n is odd, one cannot quite write the equation as immediately.
The map $s\mapsto w_0 s w_0$ exchanges two vertices of the Coxeter graph, and
this gives one term $a_{n-1}$ coming from the first sum in Equation~\eqref{summ2}.
As for the second sum, we are in the case where $s_i = w_0 s_i w_0$, 
and $W_{(i)}=D_i\times A_{n-1-i}$.
If $i$ is odd, we can apply the second case of Proposition~\ref{invohalf} where $u$ is 
chosen to be the longest element of the factor $D_i$.
More care is needed when $i$ is even, i.e. when we cannot directly apply Proposition~\ref{invohalf}.
So we consider the set $\mathcal{M}(D_i \times A_{n-1-i})$, quotiented by $D_i \times A_{n-1-i}$,
and further quotiented by the graph automorphism of the factor $D_i$ (the graph automorphism induces an action
on $\mathcal{P}(D_i)$).
An argument similar to
the one in Proposition~\ref{product} shows that the number of orbits can be factorized. Eventually, we obtain:
\begin{equation} \label{recdnodd}
  d_n = a_{n-1} + \sum_{ \substack{ 2\leq i \leq n-1 \\[1mm] i \text{ odd } }} \binom{n-1}{i}  d_i a_{n-1-i} 
                 + \sum_{ \substack{ 2\leq i \leq n-1 \\[1mm] i \text{ even } }} \binom{n-1}{i}  \bar d_i a_{n-1-i},
\end{equation}
where $\bar d_i$ is defined as follows: it is the number of orbits for the action on $\mathcal{M}(D_i)$ generated by 
$D_i$ together with the graph automorphism 
(except that if $i=4$, the graph automorphism is not unique but we only consider the one that exchanges two vertices).
Note that for odd $i$, we can define $\bar d_i$ similarly but it is clear that $\bar d_i = d_i$.
We need to compute $\bar d_n$ before solving the recursion for $d_n$. 

\begin{prop}
We have $\bar d_0=1$ and for any $n\geq 1$,
\begin{equation} \label{recbardn}
  \bar d_n = a_{n-1} + \sum_{i=2}^{n-1} \binom{n-1}{i} \bar d_i a_{n-1-i}.
\end{equation}
\end{prop}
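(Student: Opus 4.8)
The plan is to mimic the derivation of Equation~\eqref{recdnodd}, but now applied directly to the group $D_n$ with the graph automorphism adjoined, rather than to a parabolic factor. Recall that $\bar d_n$ counts the orbits on $\mathcal{M}(D_n)$ under the group generated by $D_n$ together with the (chosen) graph automorphism $\tau$ exchanging the two ``fork'' vertices of the Dynkin diagram; call this extended group $\widetilde{D}_n$. First I would observe that $\widetilde{D}_n$ is precisely the group obtained by adjoining $w_0$ when $n$ is odd (since then $w_0$ acts as $\tau$ on the diagram), and also coincides with $B_n$ acting on $\mathcal{P}(D_n)$; either way, $\tau$ normalizes $D_n$ so $\widetilde{D}_n$ is a genuine group acting on $\mathcal{M}(D_n)$, and $\bar d_n = \#(\mathcal{M}(D_n)/\widetilde{D}_n)$ is well-defined.

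Next I would run the coatom/line decomposition of Proposition~\ref{induct}, but with $W$ replaced by the action of $\widetilde{D}_n$: partition $\mathcal{M}(D_n)/\widetilde{D}_n$ according to which $\widetilde{D}_n$-orbit of lines the coatom $C_{n-1}$ lies in. The key point is that $\tau$ acts on the set $\{L_1,\dots,L_n\}$ of standard lines exactly as $w_0$ does in the odd case described in Proposition~\ref{involcase}: it fixes all lines except the two fork lines, which it swaps. Hence the $\widetilde{D}_n$-orbits of lines are represented by the $L_i$ with $i$ ranging over a transversal, and the stabilizers are computed as in Proposition~\ref{propliwi}/\ref{wli}: the maximal parabolic $W_{(i)}$ is $D_i\times A_{n-1-i}$ for the ``$D$-branch'' node (with the convention $D_0,D_1,D_2,D_3$ from the text), and for the two swapped fork nodes the common stabilizer gives the single term $A_{n-1}$. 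The term $a_{n-1}$ in \eqref{recbardn} is this fork contribution (the two fork lines merge into one orbit, each contributing $\mathcal{M}(A_{n-1})$ modulo its stabilizer, which equals $K(A_{n-1})=a_{n-1}$, exactly as in the derivation of \eqref{recdnodd}).

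For the remaining nodes $L_i$ with $2\le i\le n-1$, the coatom is an $L_i$ whose pointwise stabilizer is $D_i\times A_{n-1-i}$, and I must compute $\#(\mathcal{M}(D_i\times A_{n-1-i})/\mathrm{Stab}(L_i))$ where $\mathrm{Stab}(L_i)$ inside $\widetilde D_n$ now contains, besides $D_i\times A_{n-1-i}$, the graph automorphism $\tau$ restricted to the $D_i$ factor. A shuffle argument identical to the one in Proposition~\ref{product} factors $\mathcal{M}(D_i\times A_{n-1-i})$ as $I\times\mathcal{M}(D_i)\times\mathcal{M}(A_{n-1-i})$ with $\#I=\binom{n-1}{i}$, and since the extra automorphism acts only on the $\mathcal{M}(D_i)$ factor (trivially on $I$ and on $\mathcal{M}(A_{n-1-i})$), the number of orbits is $\binom{n-1}{i}\bar d_i\, a_{n-1-i}$. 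Summing over $i$ from $2$ to $n-1$, and adding the fork term $a_{n-1}$, yields \eqref{recbardn}. The base case $\bar d_0=1$ is immediate since $\mathcal{M}(D_0)$ is a single (empty) chain.

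The main obstacle is the bookkeeping at the small ranks forced by the identifications $A_1=B_1$, $D_2=A_1\times A_1$, $D_3=A_3$, and especially $i=4$: one must check that ``the graph automorphism'' referred to in $\bar d_i$ is consistently the transposition of two vertices (not the triality of $D_4$), so that the shuffle factorization actually sees the same automorphism that appears as $\tau$ restricted to the $D_i$-branch of $D_n$. This is exactly the parenthetical caveat in the definition of $\bar d_i$, and verifying it — together with confirming that for odd $i$ the automorphism is inner on $\mathcal{M}(D_i)$ so $\bar d_i=d_i$ as claimed — is the only delicate step; everything else is the now-routine combination of Propositions~\ref{product}, \ref{induct}, \ref{wli}, and \ref{propliwi}.
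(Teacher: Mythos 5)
Your argument is correct and follows essentially the same route as the paper: decompose the orbits of maximal chains according to the coatom's orbit under the group generated by $D_n$ and the graph automorphism, obtain the single term $a_{n-1}$ from the two fork lines merged by $\tau$, and apply the shuffle factorization of Proposition~\ref{product} to the remaining coatoms $L_i$, where the automorphism acts only on the $D_i$ factor, giving $\binom{n-1}{i}\bar d_i a_{n-1-i}$. The small-rank conventions and the $i=4$ triality caveat you flag are exactly the points the paper also singles out, so no further comment is needed.
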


\begin{proof}
Although we cannot directly apply Proposition~\ref{induct} and Proposition~\ref{product}, the argument is completely similar,
so we omit details. Let $\Gamma$ denote the graph automorphism of $D_n$.

Suppose $L_1$ and $L_2$ are the two coatoms that are exchanged by $\Gamma$. Counting orbits of maximal 
chains having $L_1$ or $L_2$ as coatom, we obtain the first term $a_{n-1}$.

If $i\neq 1,2$, the number of orbits of maximal chains having $L_i$ as coatom is the number of orbits in 
$\mathcal{M}(W_{(i)}) / <  \stab(L_i) , \Gamma > $. This is also the number of orbits in 
$\mathcal{M}(W_{(i)}) / <  W_{(i)} , \Gamma > $, since either $\stab(L_i)=W_{(i)}$ or 
$\stab(L_i)=<W_{(i)},w_0>$ where $w_0$ has the same action as $\Gamma$.
We have a decomposition $W_{(i)} = D_i \times A_{n-1-i}$ and the graph automorphism only acts on the factor 
$D_i$. So the argument of Proposition~\ref{product} shows that this number is $\bar d_i a_{n-1-i}$.
\end{proof}

\begin{prop}
If $n\geq2$, we have $\bar d _n = 2a_{n+1} - (n+1)a_n $.
\end{prop}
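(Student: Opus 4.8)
The plan is to prove the closed form $\bar d_n = 2a_{n+1} - (n+1)a_n$ by the exponential generating function method, exactly as in the treatment of $a_n$ and $b_n$ above. First I would introduce $\bar D(z) = \sum_{n\geq 0} \bar d_n \frac{z^n}{n!}$ and translate the recursion \eqref{recbardn} into a differential equation. Multiplying \eqref{recbardn} by $\frac{z^{n-1}}{(n-1)!}$ and summing over $n\geq 1$, the right-hand side becomes $A(z) + (\bar D(z) - 1)A(z) = \bar D(z) A(z)$ (the term $a_{n-1}$ supplies the ``$i=0$'' and ``$i=1$'' contributions that are missing from the explicit sum, using $\bar d_0 = \bar d_1 = 1$), so that $\bar D'(z) = \bar D(z) A(z)$ with $\bar D(0) = 1$. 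This is the same differential equation as for $B(z)$, hence $\bar D(z) = B(z) = \frac{1}{1-\sin z} = A'(z)$.

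Next I would extract the coefficients. Since $A'(z) = \frac12(A(z)^2+1)$, we get $A''(z) = A(z)A'(z)$, and one more differentiation gives $A'''(z) = A'(z)^2 + A(z)A''(z)$. Now $\bar D(z) = A'(z)$, so $\bar d_n = [z^n/n!]\, A'(z) = a_{n+1}$ would be the naive reading — but that is $b_n$, not $\bar d_n$; the point is that the desired identity $\bar d_n = 2a_{n+1} - (n+1)a_n$ must instead be verified as a functional identity on generating functions, namely $\bar D(z) = 2A'(z) - zA'(z) - A(z)$, i.e. one checks $A'(z) = 2A'(z) - zA'(z) - A(z)$, equivalently $A(z) + A'(z)(z-1) = 0$. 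Since that is false in general, the right reformulation is: $2a_{n+1} - (n+1)a_n$ has EGF $2A'(z) - (zA(z))' = 2A'(z) - A(z) - zA'(z)$, and I would instead verify directly that this equals $B(z) = \frac{1}{1-\sin z}$ using $A(z) = \tan z + \sec z$ and $A'(z) = \sec^2 z + \sec z \tan z = \frac{1+\sin z}{\cos^2 z} = \frac{1}{1-\sin z}$; plugging in, $2\cdot\frac{1}{1-\sin z} - (\tan z + \sec z) - \frac{z}{1-\sin z}$ must reduce to $\frac{1}{1-\sin z}$, i.e. $\frac{1-z}{1-\sin z} = \tan z + \sec z = \frac{1+\sin z}{\cos z} = \frac{1}{1-\sin z}\cdot\frac{(1+\sin z)(1-\sin z)}{\cos z}\cdot\frac{1}{?}$ — at which point the cleaner route is simply to observe $\tan z + \sec z = \frac{1+\sin z}{\cos z}$ and $\frac{1}{1-\sin z} = \frac{1+\sin z}{\cos^2 z} = \frac{1}{\cos z}\cdot\frac{1+\sin z}{\cos z}$, so the claimed identity is equivalent to $1 - z = \cos z \cdot \frac{1}{?}$, which is false.

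Given this, I would abandon the attempt to match $B(z)$ and instead prove the recursion is satisfied by the candidate sequence $c_n := 2a_{n+1} - (n+1)a_n$ directly: show $c_0, c_1$ agree with $\bar d_0, \bar d_1$ (for $n\geq 2$ only), and that $c_n$ satisfies \eqref{recbardn}. Concretely I would substitute $c_n$ into the right-hand side $a_{n-1} + \sum_{i=2}^{n-1}\binom{n-1}{i}c_i a_{n-1-i}$ and use the recursion \eqref{recan} for $a_n$, i.e. $a_{n} = \frac12\sum_{j=0}^{n-1}\binom{n-1}{j}a_j a_{n-1-j}$, together with the recursion \eqref{recbn} for $b_n = a_{n+1}$, namely $a_{n+1} = \sum_{i=0}^{n-1}\binom{n-1}{i}a_{i+1}a_{n-1-i}$ after reindexing (from $b_n = \sum \binom{n-1}{i}b_i a_{n-i-1}$ with $b_i = a_{i+1}$). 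Expanding $c_n = 2a_{n+1} - (n+1)a_n$ and collecting the convolution sums should produce exactly $a_{n-1} + \sum_{i=2}^{n-1}\binom{n-1}{i}c_i a_{n-1-i}$ after the low-order terms ($i=0,1$ and the $a_{n-1}$) are accounted for; the binomial bookkeeping $\binom{n-1}{i}\binom{?}{?}$ versus $(n+1)$-factors is the routine part.

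The main obstacle will be the generating-function identity: deciding whether to push through $\bar D(z) = A'(z)$ and reconcile it with the claimed formula — which forces the nontrivial analytic identity $\frac{1}{1-\sin z} = 2A'(z) - A(z) - zA'(z)$ and hence, since this is visibly false, reveals that the correct statement is that $\bar d_n = 2a_{n+1} - (n+1)a_n$ holds for $n\geq 2$ while the EGF $\bar D(z) = \frac{1}{1-\sin z}$ encodes the full sequence with $\bar d_0 = \bar d_1 = 1$ — so the formula $2a_{n+1}-(n+1)a_n$ genuinely fails at $n=0,1$ (giving $2\cdot 1 - 1 = 1$ and $2\cdot 1 - 2\cdot 1 = 0 \neq 1$), consistent with the hypothesis $n\geq 2$. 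The cleanest complete proof is therefore: establish $\bar D(z) = \frac{1}{1-\sin z}$ via the differential equation, then separately compute $[z^n/n!]\big(2A'(z) - A(z) - zA'(z)\big)$ and check by hand (using $A' = \sec^2 + \sec\tan$, $A'' = AA'$) that its Taylor coefficients agree with those of $\frac{1}{1-\sin z}$ for all $n\geq 2$ — i.e. that the two EGFs differ only by the polynomial correction $(z-1)\cdot(\text{something of degree} \leq 1)$ — which reduces to verifying $\frac{1}{1-\sin z} - 2A'(z) + A(z) + zA'(z)$ is a polynomial of degree $\leq 1$, a finite check on low-order Taylor coefficients. I expect the write-up to be short once the correct bookkeeping of the $n=0,1$ discrepancy is pinned down.
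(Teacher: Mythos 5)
There is a genuine error at the very first step, and it propagates through the whole proposal. When you translate the recursion \eqref{recbardn} into a differential equation, the sum $\sum_{i=2}^{n-1}\binom{n-1}{i}\bar d_i a_{n-1-i}$ differs from the full convolution (whose EGF is $\bar D(z)A(z)$) by the $i=0$ term $a_{n-1}$ \emph{and} the $i=1$ term $\binom{n-1}{1}\bar d_1 a_{n-2}=(n-1)a_{n-2}$. After the weighting by $\frac{z^{n-1}}{(n-1)!}$, the latter has EGF $zA(z)$, not $A(z)$, so the leading term $a_{n-1}$ of the recursion does \emph{not} restore both missing contributions. The correct equation is
\[
\bar D'(z) = A(z) + \bigl(\bar D(z)-1-z\bigr)A(z) = \bigl(\bar D(z)-z\bigr)A(z),
\]
not $\bar D'(z)=\bar D(z)A(z)$. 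Consequently $\bar D(z)$ is \emph{not} equal to $B(z)=\frac{1}{1-\sin z}$; the correct solution is $\bar D(z)=\frac{2-\cos z-z\sin z}{1-\sin z}$, which rewrites as $(2-z)A'(z)+z-A(z)$. Since the EGF of $2a_{n+1}-(n+1)a_n$ is $2A'(z)-A(z)-zA'(z)$, the two differ exactly by the polynomial $z$, giving the claim for all $n\neq 1$ (in particular for $n\geq 2$). All the ``visibly false'' identities you run into later — and the final suggestion to check that $\frac{1}{1-\sin z}-2A'(z)+A(z)+zA'(z)$ is a polynomial of degree $\leq 1$, which it is not — are symptoms of this initial miscalculation, not of a subtlety about the values at $n=0,1$.

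Your fallback plan (substitute $c_n=2a_{n+1}-(n+1)a_n$ into the recursion and verify it directly using \eqref{recan} and \eqref{recbn}) is a legitimate alternative route in principle, but it is only sketched, and the binomial bookkeeping you defer is precisely where the $(n+1)a_n$ correction has to be produced; as written, nothing in the proposal establishes the stated identity. To repair the argument, fix the differential equation as above and solve it (variation of constants with integrating factor $e^{-\int A}=1-\sin z$ works), then extract coefficients.
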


\begin{proof}
The recursion in the previous proposition shows that the generating function 
$\bar D(z) = \sum_{n\geq 0} \bar d_n \frac{z^n}{n!}$ satisfies the differential equation
\[
  \bar D'(z) = (\bar D(z)-z)A(z),
\]
with the initial condition $\bar D(0)=1$. This is solved by
\begin{equation} \label{bardz}
  \bar D(z) =  \frac{2-\cos(z)-z\sin(z)}{1-\sin(z)}.
\end{equation}
From this expression, we can get $\bar D(z) = (2-z)A'(z) + z - A(z)$, 
and it follows that $\bar d _n = 2a_{n+1} - (n+1)a_n $ if $n \geq 2$. 
\end{proof}

\begin{prop} 
Let $n\geq 2$. We have $d_n - \bar d_n = a_n$ if $n$ is even, and $d_n = \bar d_n$ otherwise.
\end{prop}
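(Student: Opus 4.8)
The plan is to handle the two parities separately, and for even $n$ to run an induction on the recursions already established: \eqref{recdneven} for $d_n$, \eqref{recbardn} for $\bar d_n$, and \eqref{recan} for $a_n$. No generating functions are needed.

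The odd case is immediate. For odd $n$ the equality $d_n=\bar d_n$ is precisely the observation recorded just before the statement: the graph automorphism of $D_n$ acts on $\mathcal{P}(D_n)$ the same way as the longest element, which already lies in $D_n$, so adjoining it changes nothing. Equivalently, one checks that the right-hand sides of \eqref{recdnodd} and \eqref{recbardn} coincide, since they differ only by using $d_i$ instead of $\bar d_i$ on the odd indices $i$, where the two are equal.

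For even $n$ I induct on $n\ge2$, the base value $n=2$ being also directly verifiable ($d_2=2$, $\bar d_2=1$, $a_2=1$). Subtracting \eqref{recbardn} from \eqref{recdneven} yields
\[
  d_n-\bar d_n=a_{n-1}+\sum_{i=2}^{n-1}\binom{n-1}{i}(d_i-\bar d_i)\,a_{n-1-i}.
\]
In the sum, $d_i-\bar d_i=0$ for odd $i$ by the odd case, and $d_i-\bar d_i=a_i$ for even $i$ in the range by the induction hypothesis (for $n=2$ the sum is empty). Hence
\[
  d_n-\bar d_n=a_{n-1}+\sum_{\substack{2\le i\le n-1\\ i\text{ even}}}\binom{n-1}{i}a_i\,a_{n-1-i}.
\]

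It remains to recognize this as $a_n$. In \eqref{recan} the terms $i=0$ and $i=n-1$ each equal $a_{n-1}$ (using $a_0=1$), so $a_n=a_{n-1}+\tfrac12\sum_{i=1}^{n-2}\binom{n-1}{i}a_i a_{n-1-i}$. Since $n$ is even, $n-1$ is odd, so the involution $i\mapsto n-1-i$ of $\{1,\dots,n-2\}$ is fixed-point free, preserves the summand $\binom{n-1}{i}a_i a_{n-1-i}$, and reverses the parity of $i$; therefore the partial sum over even $i$ equals the partial sum over odd $i$, each equal to half of the total. As the even integers of $\{1,\dots,n-2\}$ are exactly those of $\{2,\dots,n-1\}$, this rewrites $\tfrac12\sum_{i=1}^{n-2}\binom{n-1}{i}a_i a_{n-1-i}$ as $\sum_{\substack{2\le i\le n-1\\ i\text{ even}}}\binom{n-1}{i}a_i a_{n-1-i}$, matching the formula above for $d_n-\bar d_n$ and closing the induction. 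The argument is essentially bookkeeping; the only points needing care are the availability of \eqref{recdneven} and \eqref{recbardn} at the base value (and their empty sums), the input $d_i=\bar d_i$ for odd $i$, and getting the parity involution on the index right. There is no genuine obstacle, analytic or combinatorial.
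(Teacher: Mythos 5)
Your proof is correct, but it replaces the paper's argument for the even case by a genuinely different one. The paper forms the same subtracted identity
\[
  d_n - \bar d_n = \chi[n\text{ even}]\,a_{n-1} + \sum_{\substack{2\leq i\leq n-1\\ n-i\text{ even}}}\binom{n-1}{i}(d_i-\bar d_i)a_{n-1-i},
\]
but then packages it into the generating function $U(z)=1+\sum_{n\geq 2}(d_n-\bar d_n)\frac{z^n}{n!}$, solves $U'=U\tan$, $U(0)=1$, to get $U=\sec$, and reads off both parities at once ($\sec$ is even with coefficients $a_{2m}$). You instead run an induction on even $n$, feeding in $d_i-\bar d_i=a_i$ for smaller even $i$ and $0$ for odd $i$, and then identify the resulting sum with $a_n$ via the fixed-point-free, parity-reversing involution $i\mapsto n-1-i$ applied to the symmetric convolution \eqref{recan}; this symmetry step is exactly the combinatorial content that the identity $\sec(z)=\tfrac12\bigl(A(z)+A(-z)^{-1}\cdot\ldots\bigr)$-style manipulation hides in the paper, here made explicit and elementary. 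Your treatment of the odd case (quoting the remark that $\bar d_i=d_i$ for odd $i$, or equivalently comparing \eqref{recdnodd} with \eqref{recbardn}) matches the paper's implicit use of the same fact. The base case $d_2=2$, $\bar d_2=1$, $a_2=1$ checks out against the tabulated values. What your route buys is independence from the closed forms for $A(z)$ and from solving an ODE; what the paper's route buys is a two-line uniform argument once the generating-function machinery of the preceding subsections is in place. Both are complete.
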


\begin{proof}
If $n\geq 2$, from \eqref{recdneven}, \eqref{recdnodd}, and \eqref{recbardn}, we have:
\[
  d_n - \bar d_n = \chi[ n \text{ even}] \times a_{n-1} + \sum_{\substack{  2\leq i \leq n-1 \\[1mm] n-i \text{ even }  }} \binom{n-1}{i} (d_i-\bar d_i) a_{n-1-i}.
\]
Here and in the sequel, $\chi$ means 1 or 0 depending on whether the condition within brackets is true or false.
So the generating function
\[
  U(z) = 1 + \sum_{ n\geq 2 }  ( d_n - \bar d_n ) \frac{z^n}{n!}
\]
satisfies $U'(z) = U(z)\tan(z) $ and $U(0)=1$. This is solved by $U(z)=\sec(z)$
and the result follows.
\end{proof}

From the previous two propositions, we get that for $n\geq 2$,
\[
  d_n = \begin{cases}
           2T_{n+1} - n T_n & \text{ if } n \text{ is even,} \\
           2T_{n+1} - (n+1) T_n & \text{ if } n \text{ is odd.}
        \end{cases}
\]
From \eqref{bardz}, we can separate the odd and even parts of $\bar D(z)$ (multiply the numerator and denominator by $1+\sin(z)$
and separate terms in the numerator). After some calculation, this leads to:
\begin{equation*}
 \sum_{n\geq 1}  d_{2n} \frac{z^{2n}}{(2n)!} = \frac{ \sin(z) (2\sin(z) - z) }{  \cos(z)^2  },
\end{equation*}
and
\begin{equation*}
 \sum_{n\geq 1}  d_{2n+1} \frac{z^{2n+1}}{(2n+1)!} = \frac{ \sin(z)(2-\cos(z)) -z }{ \cos(z)^2 }.
\end{equation*}
We can take the sum of these two equations to obtain $\sum_{n\geq 2} d_n \frac{z^n}{n!}$,
but there seems to be no particular simplification.
The first values of $\bar d_n$ for $n\geq 2$ are as follows:
\[
  1,2,7,26,117,594,3407,21682,151853,1160026,9600567...
\]
And the first values of $d_n$ for $n\geq 2$ are:
\[
  2,2,12,26,178,594,4792,21682,202374,1160026,12303332,...
\]

\subsection{Remaining cases}

For the dihedral group, we have:
\[
  K( I_2(m) ) = \begin{cases}
                    1 \text{ if } m \text{ is odd,} \\
                    2 \text{ if } m \text{ is even.}
                \end{cases}
\]

Among the exceptional groups, $E_6$ is the only one where the longest element is not central.
We apply Equation~\eqref{summ2} and the calculation is the following:
\begin{align*}
  K(E_6) &=  K(D_5) + K(A_4\times A_1) + \frac 12 K(A_2\times A_1 \times A_2 ) + K(A_5) \\
         &= 26+25+15+16 = 82.
\end{align*}
The first two terms correspond to the terms where $s_i\neq s_j$ and $w_0s_iw_0=s_j$.
The third term corresponds to a fixed point of the graph automorphism, the vertex of degree 3.
It is treated using the second part of Proposition~\ref{invohalf}.
The fourth term corresponds to the other fixed point of the graph automorphism,
it is treated using the first part of Proposition~\ref{invohalf}.

For all the remaining groups, the longest element is central and we can apply Equation~\eqref{summ1}. This gives:
\begin{align*}
  K(H_3) &= K( I_2(5) ) +  K( A_1\times A_1 ) + K( A_2 ) = 4,  \\
  K(H_4) &=  K( H_3 ) + K( I_2(5) \times A_1 ) + K( A_2\times A_1 ) + K( A_3) = 12, \\
  K(F_4) &= K(B_3) + K(A_2\times A_1) + K(A_1\times A_2) + K(B_3) =  16. 
\end{align*}
Eventually, we have:
\begin{align*}
  K(E_7) &= K(E_6) + K(D_5\times A_1) + K( A_4\times A_2 ) +  \\
           & \qquad K(A_3\times A_1\times A_2) +  K(A_1\times A_5) + K(D_6) + K(A_6) \\
         &= 82 +  156 + 75 + 120 + 96  + 178 + 61 = 768,
\end{align*}
and
\begin{align*}
  K(E_8) &=  K(E_7) + K(E_6\times A_1) + K(D_5\times A_2) + K(A_4\times A_3) + \\
           & \qquad K(A_2\times A_1 \times A_4) + K(A_6\times A_1) + K(D_7) + K(A_7) \\
         &= 768 + 574 + 546 + 350 + 525 + 427 + 594 + 272 = 4056.
\end{align*}

%

\section{Final remarks }

\label{gener}

Let us briefly mention some related results that will appear in \cite{josuat}.
Let $c$ be a Coxeter element for $W$, and consider the set of {\it noncrossing partitions}
$\mathcal{P}^{NC}(W,c)$ (see \cite{armstrong} for background on noncrossing partitions).
This sets naturally embeds in $\mathcal{P}(W)$. It is not stable under the action of $W$, but
we can consider how it is divided in equivalence classes (each class is the intersection 
of $\mathcal{P}^{NC}(W,c)$ with a $W$-orbit). There are $K(W)$ equivalence classes.
We will show in \cite{josuat} that 
we can in some sense compute the cardinality of each class, and that this leads to 
hook length formulas in type A and B.

Let us end this article which a more general question, which is not very precisely stated.
Let $G$ be a subgroup of $GL(\mathbb{R}^n)$ (there are probably some restrictions to consider,
see below). We can define a set
\begin{align*}
  \mathcal{P}(G) &= \big\{ \pi \subset \mathbb{R}^n \, : \,  \exists g\in G, \, \pi=\fix(g)     \big\},
\end{align*}
and let $\mathcal{M}(G)$ denote the set of maximal chains in $\mathcal{P}(G)$ with respect to inclusion.
The group $G$ acts on $\mathcal{P}(G)$ and $\mathcal{M}(G)$, and we can define
let $K(G) = \# (\mathcal{M}(G)/G) $. We have examined the case where $G$ is a finite reflection group 
but we see that the definition is valid in a more general context.
Note that a natural restriction on the group $G$ is the requirement that $\mathcal{M}(G)$ is a set
of complete flags.
Suppose for example that $G$ is the set of invertible upper-triangular matrices. Then $\mathcal{P}(G)$ is the set of all 
linear subspaces of $\mathbb{R}^n$, as can be seen using the $LU$ decomposition. So $\mathcal{M}(G)$ is 
the complete flag variety $GL(\mathbb{R}^n) / G $. Using the Bruhat decomposition, we see that $K(G)=n!$.
It might be of interest to examine the case of other groups.

\setlength{\parindent}{0mm}

\end{document}